\newtheorem{theorem}{Theorem}[section]
\newtheorem{lemma}[theorem]{Lemma}
\theoremstyle{definition}
\newtheorem*{definition}{Definition}
\newtheorem{ex}[theorem]{Example}
\def\NN{{\mathbb N}}
\let\geq\geqslant
\let\leq\leqslant
\def\HG{{\mathcal G}}
\def\gplus#1{\mathop{+\!\!_{_#1}}}
\def\gtimes#1{\mathop{\times\!\!_{_#1}}}
\def\hgn{\text{\rm HG}}
\def\cnst#1{\star #1}
\def\hats{{\sc Hats\ }}
\newcommandx{\unsure}[2][1=]{\todo[linecolor=red,backgroundcolor=red!25,bordercolor=red,#1]{#2}}
\newcommandx{\change}[2][1=]{\todo[linecolor=blue,backgroundcolor=blue!25,bordercolor=blue,#1]{#2}}
\newcommandx{\info}[2][1=]{\todo[linecolor=OliveGreen,backgroundcolor=OliveGreen!25,bordercolor=OliveGreen,#1]{#2}}
\newcommandx{\improvement}[2][1=]{\todo[linecolor=Plum,backgroundcolor=Plum!25,bordercolor=Plum,#1]{#2}}
\newcommandx{\thiswillnotshow}[2][1=]{\todo[disable,#1]{#2}}
\tikzstyle{win_nodes}=[circle, draw=black, inner sep=1.5, fill=black]
\tikzstyle{win_edges}=[draw=black, thick]
\tikzstyle{lose_nodes}=[circle, draw=black, inner sep=1.5, fill=black]
\tikzstyle{lose_edges}=[draw=black, thick]
\definecolor{lgreen}{RGB}{180, 180, 180}
\tikzstyle{vecArrow} = [thick, decoration={markings,mark=at position
\tikzstyle{innerWhite} = [semithick, white,line width=1.4pt, shorten >= 4.5pt]
\title{The Hats game. The power of constructors}
\author{Aleksei Latyshev\thanks{ITMO University, St.Petersburg, Russia. Email:
    aleksei.s.latyshev@gmail.com} \and Konstantin Kokhas\thanks{St.Petersburg
    State University, St.Petersburg, Russia. Email: kpk@arbital.ru}}
\begin{document}
\maketitle

\begin{abstract}
  We analyze the following general version of the deterministic Hats game.
  Several sages wearing colored hats occupy the vertices of a graph. Each sage
  can have a hat of one of $k$ colors. Each sage tries to guess the color of his
  own hat merely on the basis of observing the hats of his neighbors without
  exchanging any information. A predetermined guessing strategy is winning if it
  guarantees at least one correct individual guess for every assignment of
  colors. We present an example of a planar graph for which the sages win for $k
  = 14$. We also give an easy proof of the theorem about the Hats game on
  “windmill” graphs.
\end{abstract}

\section{Introduction}

The hat guessing game goes back to an old popular Olympiad problem. Its
generalization to arbitrary graphs has recently attracted the interests of mathematicians (see, e.g. \cite{bosek19_hat_chrom_number_graph,
  he20_hat_guess_books_windm, alon2020hat}). The theory of the game is based on
methods of combinatorial graph theory.

In this paper, we consider the version of the hat guessing game where sages
located at graph vertices, try to guess the colors of their own hats. They
can see the colors of hats on the sages at the adjacent vertices only. The
sages act as a team, using a deterministic strategy fixed at the beginning.
If at least one of them guesses a color of his own hat correctly, we say
that the sages win.

Most studies of this game consider the version where each sage
gets a hat of one of $k$ colors. The maximum number $k$ for which the sages can
guarantee the win is called the \emph{hat guessing number} of graph $G$ and denoted by
$\hgn(G)$. The computation of the hat guessing number for an arbitrary graph is a hard
problem. Currently, it is solved only for few classes of graphs: for complete graphs,
trees (folklore), cycles~\cite{cycle_hats}, and pseudotrees~\cite{Kokhas2018}. Also, there are some results for ``books'' and ``windmills'' graphs, see ~\cite{he20_hat_guess_books_windm}.

N.~Alon et al.~\cite{alon2020hat} studied the relation between the
hat guessing number and other graph parameters.
The problem of bounds on the hat guessing numbers of planar graphs
was mentioned in the papers~\cite[Conjecture 4]{bosek19_hat_chrom_number_graph} and \cite[Question
5.2]{he20_hat_guess_books_windm}. %
At the moment, the maximum known hat guessing number for planar graphs is 12 (see \cite{he20_hat_guess_books_windm}).

In their previous papers~\cite{Kokhas2019cliques1rus_en}
and~\cite{Kokhas2019cliques2rus_en} (joint with V.\,Retinsky), the authors considered the
version of the game with a variable number of hats (i.e., when the number of possible colors can
differ from sage to sage). This version is not only of independent
interest, but opens a more flexible approach to the analysis of the classical hat guessing game,
because it has nontrivial techniques for building strategies.

In this paper, we continue to study the hat guessing game with a variable number of colors.
We show, using the constructors machinery, how to build a planar graph with hat
guessing number at least 14. Also, we give a quite simple proof of the windmills theorem from ~\cite{he20_hat_guess_books_windm}.

This paper is structured as follows.

In the second section, we give necessary definitions and notations, recall several
constructor theorems from~\cite{Kokhas2019cliques1rus_en}, and give a
simple proof of the windmills theorem from~\cite{he20_hat_guess_books_windm} as an example of using these constructors.

In the third section, we build an outerplanar graph with hat guessing number at
least 6 (example~\ref{ex:planar6}) and a planar graph with hat guessing number at least 14 (theorem~\ref{thm:planar6}).

\section{Constructors}

\subsection{Definitions and notations}

We use the following notations.

$G = \langle {V, E} \rangle$ is a visibility graph, i.\,e.\ graph with sages
in its vertices; we identify a sage with the a corresponding vertex.

$h\colon V\to \NN$ is a \emph{``hatness'' function}, which indicates the number of
different hat colors that a sage can get. For a sage $A\in V$, we call the
number $h(A)$ the \emph{hatness} of sage $A$. We may assume
that the hat color of the sage $A$ is a number from 0 to $h(A)-1$, or a residue modulo $h(A)$.

\begin{definition}
  The hat guessing game or \hats for short is a pair $\HG=\langle {G, h} \rangle$, where $G$
  is a visibility graph, and $h$ is a hatness function. So, sages are
  located at the vertices of the visibility graph $G$ and participate in a \emph{test}.
  During the test, each sage $v$ gets a hat of one of $h(v)$ colors. The sages
  do not communicate with each other and try to guess the colors of their own hats. If at least
  one of %
  their guesses is correct, the sages \emph{win}, or the game is
  \emph{winning}. In this case, we say that the graph is winning too, keeping in mind
  that this property depends also on the hatness function. Games in which the sages have no winning strategy are said to be \emph{losing}.

  Denote by $\langle {G, \cnst{m}} \rangle$ the classical hat guessing game where the  hatness function has constant value $m$. We say that a game $\HG_1 =
  \langle {G_1, h_1} \rangle$ \emph{majorizes} a game $\HG_2 = \langle {G_2,
    h_2} \rangle$, if $G_1=G_2$ and $h_1(v) \geq h_2(v)$ for every $v\in V$.
  Obviously, if a winning game $\HG_1$ majorizes a game $\HG_2$, then $\HG_2$ is
  a winning too. And vice versa: if a losing game $\HG_2$ is majorized by a
  game $\HG_1$, then $\HG_2$ is losing too.

\end{definition}

Let $m=\min\limits_{A\in V(G)} h(A)$. Then the game $\langle {G, h} \rangle$
majorizes game $\langle {G, \cnst{m}} \rangle$. In this case, if the game
$\HG = \langle {G, h} \rangle$ is winning, then $\hgn(G)\geq m$.

\subsection{Constructors}

By constructors we mean theorems that allow us to build new winning graphs by
combining graphs for which their winning property is already proved. Here are
several constructors from the papers~\cite{Kokhas2019cliques1rus_en,
  Kokhas2019cliques2rus_en}.

\begin{definition}
  Let $G_1=\langle {V_1, E_1} \rangle$, $G_2=\langle {V_2, E_2} \rangle$ be two
  graphs sharing a common vertex $A$. \emph{The sum of graphs $G_1$, $G_2$ with
  respect to vertex $A$} is the graph $\langle {V_1\cup V_2, E_1\cup E_2} \rangle$. We
  denote this sum by  $G_1\gplus{A}G_2$.

  Let $\HG_1 =\left\langle G_1, h_1 \right\rangle$, $\HG_2 =\left\langle G_2, h_2
  \right\rangle$ be two games such that $V_1\cap V_2 = \{A\}$. The game  $\HG =
  \langle {G_1 \gplus{A} G_2, h} \rangle$, where $h(v)$ equals $h_i(v)$ for $v\in
  V(G_i)\setminus\{A\}$ and $h(A)=h_1(A) \cdot h_2(A)$
  (fig.~\ref{fig:multiplication}), is called the \emph{product} of games  $\HG_1$,
  $\HG_2$ with respect to the vertex $A$. We denote the product by $\HG_1\gtimes{A}
  \HG_2$.
\end{definition}

\begin{figure}[H]
\kern-.5cm
  \centering
    \begin{tikzpicture}[
      scale=.7,
      every label/.append style = {font=\footnotesize},
      every node/.style = win_nodes,
      every edge/.append style = win_edges
      ]
      \foreach [count=\i] \x in {0, 6, 10, 14}
      \node[circle,draw,lightgray] (c\i) at (\x, 0) [minimum size=60pt] {};

      \foreach [count=\i] \x in {2, 4, 12, 12}
      {
        \coordinate (a\i) at (\x, 0);

        \draw[lightgray, fill=lightgray] (a\i) --
        (tangent cs:node=c\i,point={(a\i)},solution=1) --
        (tangent cs:node=c\i,point={(a\i)},solution=2) -- cycle;
      }

      \begin{scope}[every node/.style = {}]
        \node at (3, 0) {{$\times$}};
        \node at (8, 0) {{$=$}};
      \end{scope}

      \node[black, label=above:$h_1(A)$, label=below:$A$] (v) at (2, 0) {};
      \node[black, label=above:$h_2(A)$, label=below:$A$] (u) at (4, 0) {};
      \node[black, label=above:$h_1(A)\cdot h_2(A)$, label=below:$A$] (vu) at (12, 0) {};
    \end{tikzpicture}
  \caption{The product of games}
  \label{fig:multiplication}
\end{figure}
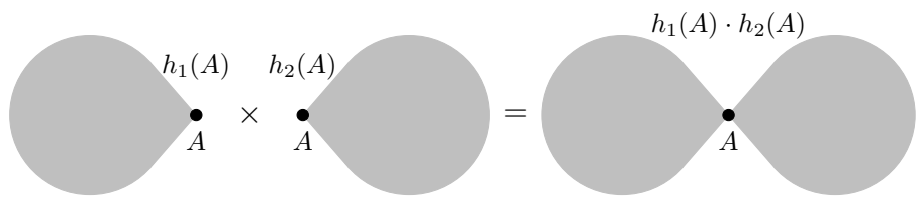

\begin{theorem}[on the product of games]
  \label{thm:multiplication}
  \cite[Theorem 3.1]{Kokhas2019cliques1rus_en}
  Let $\HG_1 = \langle {G_1^A, h_1} \rangle$ and $\HG_2 = \langle {G_2^A, h_2}
  \rangle$ be two games such that $V(G_1)\cap V(G_2)=\{A\}$. If sages win in
  the games $\HG_1$ and $\HG_2$, then they win also in game $\HG = \HG_1\gtimes{A}
  \HG_2$.
\end{theorem}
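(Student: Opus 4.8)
The plan is to turn the factorization $h(A)=h_1(A)\cdot h_2(A)$ into a genuine Cartesian product: I would identify each hat color available at $A$ in the game $\HG$ with a pair $(c_1,c_2)$, where $c_1\in\{0,\dots,h_1(A)-1\}$ is a legal color for $A$ in $\HG_1$ and $c_2\in\{0,\dots,h_2(A)-1\}$ is a legal color for $A$ in $\HG_2$. Thus a single hat on $A$ in the product game simultaneously carries an $\HG_1$-color (its first coordinate) and an $\HG_2$-color (its second coordinate). I would fix winning strategies $\sigma_1$, $\sigma_2$ for $\HG_1$, $\HG_2$, and write $g_v^i$ for the guessing function of sage $v$ in game $\HG_i$, recalling that $g_v^i$ depends only on the colors that $v$ sees on its neighbors in $G_i$. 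The hypothesis $V(G_1)\cap V(G_2)=\{A\}$ is crucial here: every sage other than $A$ lies in exactly one of the two graphs, and the neighborhoods of $A$ in $G_1$ and in $G_2$ are disjoint, so in the product game $A$ can read off both sets of neighbors at once.

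I would then define the combined strategy for $\HG=\HG_1\gtimes{A}\HG_2$ as follows. A sage $v\in V(G_i)\setminus\{A\}$ plays exactly as prescribed by $\sigma_i$, using the $i$-th coordinate of $A$'s color whenever $A$ happens to be its neighbor, and otherwise seeing literally the same picture as in $\HG_i$. Sage $A$ guesses the \emph{pair} $\bigl(g_A^1,\,g_A^2\bigr)$, that is, it computes its first coordinate from its $G_1$-neighbors exactly as it would in $\HG_1$, and its second coordinate from its $G_2$-neighbors exactly as it would in $\HG_2$.

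To verify that this strategy always wins, I would fix an arbitrary coloring of $\HG$, write $(a_1,a_2)$ for the color of $A$, and consider the simulated play of $\sigma_i$ on $G_i$ under the coloring that assigns $a_i$ to $A$ and the real colors to all other vertices of $G_i$. For $v\in V(G_i)\setminus\{A\}$ the input $v$ receives in this simulation coincides with what $v$ sees in the product game (using that $v$ reads only the $i$-th coordinate of $A$), so such a $v$ guesses correctly in the simulation if and only if it guesses correctly in $\HG$. Hence if some sage other than $A$ is correct in either simulation, we are done. In the remaining case no sage other than $A$ is correct in the $\HG_1$-simulation, so — because $\HG_1$ is winning — sage $A$ itself must be correct there, forcing $g_A^1=a_1$; symmetrically $g_A^2=a_2$. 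Then $A$'s pair guess $\bigl(g_A^1,g_A^2\bigr)$ equals $(a_1,a_2)$ and $A$ wins.

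The crux is exactly this final case together with the design of $A$'s strategy that resolves it. The delicate point is that winning both side games "through $A$" a priori only says that $A$ would guess each coordinate correctly in two \emph{separate} hypothetical colorings; one must check that both hypothetical colorings agree with the single real coloring on the neighbors $A$ actually observes, so that the two individually-correct coordinate guesses can be bundled into one correct guess of the product color $(a_1,a_2)$. It is precisely the product structure on $h(A)$, combined with the disjointness of $A$'s two neighborhoods, that decouples the coordinates and makes $g_A^1$ and $g_A^2$ computable from genuinely available, non-interfering information.
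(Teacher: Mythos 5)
Your proof is correct, and it is essentially the standard argument for this constructor: the paper itself only cites the proof from \cite[Theorem 3.1]{Kokhas2019cliques1rus_en}, where the color of $A$ is likewise decomposed as a pair, the two subgames are simulated with the respective coordinates, and the failure of all non-$A$ sages in both simulations forces $A$'s pair guess $(g_A^1,g_A^2)$ to equal $(a_1,a_2)$. No gaps; the point you flag about the two hypothetical colorings agreeing with the real one on $A$'s observed neighborhoods is exactly the right thing to check.
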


\begin{theorem}\label{thm:sum-lose}
  \cite[Theorem 4.1]{Kokhas2019cliques2rus_en}
  Let $G_1$ and $G_2$ be graphs such that $V(G_1)\cap  V(G_2)=\{A\}$,  $G=  G_1+_AG_2$.
  Let $\HG_1=\langle G_1, h_1\rangle$ and $\HG_2=\langle
  G_2, h_2\rangle $ be losing, $h_1(A)\geq h_2(A)=2$. Then
  the game $\HG=\langle G_1+_AG_2, h\rangle$ is losing, where
  $$
  h(x) =
  \begin{cases}
    h_1(x),&x\in V(G_1)\\
    h_2(x),&x\in V(G_2)\setminus A.
  \end{cases}
  $$
\end{theorem}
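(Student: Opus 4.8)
The plan is to fix an arbitrary strategy $S$ of the sages in $\HG$ and manufacture a colouring defeating it; since $S$ is arbitrary this shows $\HG$ is losing. Write $M=h_1(A)$ for the number of colours of $A$ in $\HG$, and let $N_1(A),N_2(A)$ be the neighbours of $A$ in $G_1,G_2$. The first observation is that the two sides interact only through $A$: a sage in $V(G_1)\setminus\{A\}$ sees only $G_1$-vertices and a sage in $V(G_2)\setminus\{A\}$ sees only $G_2$-vertices. For a colour $a$ of $A$ let $D_a$ (resp. $E_a$) be the set of colourings of $V(G_1)\setminus\{A\}$ (resp. $V(G_2)\setminus\{A\}$) on which, with $A$ coloured $a$, every sage of $G_1$ (resp. $G_2$) except $A$ errs, and let $Q_a$ and $P_a$ be the projections of $D_a$ and $E_a$ onto the colourings of $N_1(A)$ and $N_2(A)$ respectively. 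A colouring defeats $S$ exactly when, for the colour $a$ of $A$, its $G_1$-part lies in $D_a$, its $G_2$-part lies in $E_a$, and $A$ itself errs; reading this through the projections, such a colouring exists iff for some $a$ there is $(x,y)\in Q_a\times P_a$ with $g_A(x,y)\ne a$, where $g_A$ is $A$'s guessing function. Equivalently, $S$ is winning iff the rectangles $Q_a\times P_a$, $a=0,\dots,M-1$, are pairwise disjoint (on two overlapping rectangles $g_A$ would be forced to take two different values).

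Next I would record two local criteria obtained by the same bookkeeping applied to a single side whose vertex $A$ has a free guess. For the $G_1$-part of $S$: since $\HG_1$ is losing, no choice of a guessing function for $A$ turns $(g_v)_{v\in V(G_1)\setminus\{A\}}$ into a winning strategy of $\HG_1$; by the rectangle bookkeeping on $G_1$ alone this means the sets $Q_0,\dots,Q_{M-1}$ are not pairwise disjoint, so there is a pair with $Q_\alpha\cap Q_{\alpha'}\ne\varnothing$ and $\alpha\ne\alpha'$. For the $G_2$-part, where $A$ has only the two colours $0,1$, the same reasoning shows that $\HG_2$ is losing iff $P_0\cap P_1\ne\varnothing$ (a guess function for $A$ wins iff it sends every element of $P_0$ to $0$ and every element of $P_1$ to $1$, which is possible precisely when $P_0$ and $P_1$ are disjoint).

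The heart of the argument — and the step I expect to be the main obstacle — is to make the colour of $A$ forced by the $G_1$-side agree with a colour the $G_2$-side can also accommodate: a priori the colliding pair $\{\alpha,\alpha'\}$ produced on the $G_1$-side need not be $\{0,1\}$, while the two-colour criterion only speaks about $P_0\cap P_1$. I would resolve this by exploiting $h_2(A)=2$ through a relabelling trick: given any two colours $\alpha\ne\alpha'$ of $A$, reinterpret them as the two colours of $A$ in $\HG_2$ and let each sage of $G_2\setminus A$ use the slice of $S$ corresponding to $A=\alpha$ and $A=\alpha'$. This is a legitimate strategy of $\HG_2$, hence losing, and the two-colour criterion applied to it yields $P_\alpha\cap P_{\alpha'}\ne\varnothing$. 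Thus every pair of colours $P$-collides, so in particular the pair $\{\alpha,\alpha'\}$ with $Q_\alpha\cap Q_{\alpha'}\ne\varnothing$ also satisfies $P_\alpha\cap P_{\alpha'}\ne\varnothing$, and the rectangles $Q_\alpha\times P_\alpha$ and $Q_{\alpha'}\times P_{\alpha'}$ meet in some $(x,y)$. Since $g_A(x,y)$ cannot equal both $\alpha$ and $\alpha'$, it differs from one of them, say $\alpha$; choosing $A=\alpha$, a $G_1$-colouring in $D_\alpha$ whose $N_1(A)$-part is $x$, and a $G_2$-colouring in $E_\alpha$ whose $N_2(A)$-part is $y$, we obtain a colouring on which every sage errs, which defeats $S$ and would complete the proof.
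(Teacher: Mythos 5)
The paper does not prove Theorem~\ref{thm:sum-lose} at all: it is imported verbatim from the authors' earlier work (Theorem~4.1 of \cite{Kokhas2019cliques2rus_en}) and used here only as a black-box constructor, so there is no in-paper proof to compare against. Judged on its own, your argument is correct and complete. The decomposition into the sets $D_a,E_a$ and their projections $Q_a,P_a$ onto the neighbourhoods of $A$ correctly captures the only channel of interaction between the two sides; the reduction ``$\HG_1$ losing $\Rightarrow$ the $Q_a$ are not pairwise disjoint'' is sound because the non-$A$ sages of $G_1$ see only $G_1$-vertices, so their part of $S$ is a legitimate partial strategy for $\HG_1$; and the relabelling trick (slicing $S$ at $A=\alpha$ and $A=\alpha'$ to obtain a strategy for $\HG_2$, whence $P_\alpha\cap P_{\alpha'}\ne\varnothing$ for \emph{every} pair) is exactly where the hypothesis $h_2(A)=2$ is needed and is used correctly --- for $h_2(A)>2$ one would only learn that every $h_2(A)$-subset contains \emph{some} colliding pair, which is too weak. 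The final step, intersecting the two rectangles and defeating $g_A$ at a common point, is airtight. One cosmetic remark: your parenthetical ``$S$ is winning iff the rectangles are pairwise disjoint'' is only an implication in the direction you use (disjointness is necessary for $S$ to win, but not sufficient for the \emph{given} $g_A$); since your closing paragraph constructs the defeating colouring explicitly rather than invoking the converse, this looseness does not affect the proof.
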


\begin{theorem}[On ``cone'' with vertex $O$ over graph $G$]\label{thm:konus}
  \cite[Theorem 4.5]{Kokhas2019cliques2rus_en}
  Assume the a game $\HG=\langle G, h \rangle$, where $V(G)=\{A_1, A_2, \ldots, A_k\}$,
  and $k$ games $\HG_i=\langle G_i, h_i\rangle$, $1\leq i\leq k$, are winning and the sets $V(G_i)$ are disjoint. In each graph $G_i$ one vertex is labeled $O$, and
  one of its neighbors is labeled $A_i$ such that the equality
  $h_1(O)=h_2(O)=\ldots=h_k(O)$ holds. Consider a new graph $G'=\langle{V(G'),
    E(G')}\rangle$, where
  $$
    V(G')=V(G_1)\cup\ldots \cup V(G_k),
    \qquad
    E(G')=E(G_1)\cup\ldots\cup E(G_k)\cup  E(G).
  $$
  Then game  $\langle G', h'\rangle$ is winning, where
  $$
  h'(x)=\begin{cases}
    h_i(x),         & \text{if $x$ belongs to one of the sets  \ } V(G_i)\setminus\{A_i\}, \\
    h_i(A_i)h(A_i), & \text{if $x$ coincides with $A_i$}.
  \end{cases}
  $$
\end{theorem}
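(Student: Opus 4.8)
The plan is to read the construction as a simultaneous ``product at every $A_i$'' (in the sense of Theorem~\ref{thm:multiplication}) stacked on top of the base game $\HG$, and to let the apex $O$ act as a global referee that figures out which copy $G_i$ is responsible for the win. Write $n_i=h_i(A_i)$, $m_i=h(A_i)$ and $q=h_1(O)=\dots=h_k(O)$, and fix a bijection between the $n_im_i$ colors of $A_i$ and the pairs $(a_i,b_i)\in\ZZ_{n_i}\times\ZZ_{m_i}$; I will call $a_i$ the \emph{inner} coordinate and $b_i$ the \emph{base} coordinate, and write $c\in\ZZ_q$ for the color of $O$. From the winning strategy of $\HG$ I extract, for each $i$, a function $f_i$ producing $A_i$'s guess of $b_i$ out of the base coordinates of the $G$-neighbors of $A_i$, with the guarantee that for every tuple $(b_1,\dots,b_k)$ at least one index satisfies $f_i=b_i$. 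From the winning strategy of each $\HG_i$ I extract the guesses of all its sages; in particular $\alpha_i\in\ZZ_{n_i}$ denotes the guess of $A_i$ and $\omega_i\in\ZZ_q$ the guess of $O$, each computed from the $G_i$-neighbors of the respective vertex, with $A_i$ read through its inner coordinate $a_i$.

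First I would fix the joint strategy on $G'$. Every inner sage $x\in V(G_i)\setminus\{A_i,O\}$ simply replays its $\HG_i$-guess, reading the color of $A_i$ as $a_i$; since the only edges of $G'$ with one endpoint in $V(G_i)$ and the other outside are incident to $A_i$ or to $O$, such an $x$ has the same neighbourhood in $G'$ as in $G_i$, so it errs in $G'$ exactly when it errs in $\HG_i$. The vertex $A_i$ guesses the pair $(\alpha_i,f_i)$, which it can do because it sees $O$ and its inner $G_i$-neighbors (whence $\alpha_i$) and its $G$-neighbors (whence $f_i$). The apex $O$ plays as follows: because every $A_j$ is by hypothesis a neighbor of $O$, the vertex $O$ sees all base coordinates $b_1,\dots,b_k$, hence can evaluate every $f_i$ and form the set $W=\{\,i:f_i=b_i\,\}$; it then selects $i^{*}=\min W$ and announces $\omega_{i^{*}}$, recomputed from its $G_{i^{*}}$-neighbors.

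Next I would run the analysis assuming that no inner sage guesses correctly (otherwise we are done). Fix $i$ and consider the coloring of $G_i$ in which $O$ has color $c$, $A_i$ has inner color $a_i$, and every inner vertex keeps its real color. The inner sages of $G_i$ all err, so by the winning property of $\HG_i$ either $A_i$ or $O$ succeeds there, that is,
\[
  \alpha_i=a_i \quad\text{or}\quad \omega_i=c \qquad\text{for every } i. \qquad(\ast)
\]
By the winning property of $\HG$ the set $W$ is nonempty, so $i^{*}$ is defined. If $\omega_{i^{*}}=c$, then $O$ guesses correctly. Otherwise $(\ast)$ with $i=i^{*}$ forces $\alpha_{i^{*}}=a_{i^{*}}$, while $i^{*}\in W$ gives $f_{i^{*}}=b_{i^{*}}$; hence the guess $(\alpha_{i^{*}},f_{i^{*}})=(a_{i^{*}},b_{i^{*}})$ of $A_{i^{*}}$ is correct. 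Either way some sage wins, so the strategy is winning and $h'(A_i)=n_im_i$ is exactly the hatness the bijection requires.

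The step I expect to be the crux is $O$'s move: a priori $O$ is handed $k$ candidate colors $\omega_1,\dots,\omega_k$, it knows via $(\ast)$ that at least one of them equals $c$ as soon as the base layer is not already settled by some $A_i$, yet it must commit to a single guess without knowing $c$. The resolution is that $O$, seeing every $A_j$, can reconstruct the whole base-layer play of $\HG$ and thus compute $W$ on its own, locking onto the very index $i^{*}$ that the base game designates and collapsing the $k$-fold ambiguity into one correct guess. Two routine points must be verified along the way: that the color set of $A_i$ genuinely splits as $\ZZ_{n_i}\times\ZZ_{m_i}$ (a bijection of cardinalities, with no coprimality needed), and that reading $A_i$ through $a_i$ makes the $G'$-observation of each inner sage and of $O$ agree verbatim with its $\HG_i$-observation, so that the winning guarantees of the games $\HG_i$ transfer without change.
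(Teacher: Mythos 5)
Your proof is correct. Note that the paper itself does not prove this statement --- it imports it as Theorem 4.5 of \cite{Kokhas2019cliques2rus_en} --- but your argument is exactly the standard one for this constructor: split each $A_i$'s color into an inner and a base coordinate as in the product theorem, let $O$ recover the whole base-layer play of $\HG$ from the $A_j$'s it sees, and commit to the $\HG_{i^*}$-guess for an index $i^*$ with $f_{i^*}=b_{i^*}$, so that the dichotomy $\alpha_{i^*}=a_{i^*}$ or $\omega_{i^*}=c$ closes the case analysis. All the delicate points (the $O$-vertices of the $G_i$ being identified into a single apex adjacent to every $A_j$, the inner sages' and $O$'s observations in $G'$ agreeing with their $\HG_i$-observations once $A_i$ is read through $a_i$) are handled.
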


To apply these constructor theorems we need ``bricks'', i.\,e.\ examples of
winning (or losing) graphs. The following theorem gives us a whole class of
such examples.

\begin{theorem}\label{thm:clique-win}
  \cite[Theorem 2.1]{Kokhas2019cliques1rus_en}
  Let $a_1$, $a_2$, \dots, $a_n$ be the hatnesses of $n$ sages located at the vertices of the complete graph. Then the sages win if and only if
  \begin{equation}
    \frac1{a_1}+\frac1{a_2}+\ldots+\frac1{a_n}\geq 1.
    \label{eq:clique-win}
  \end{equation}
\end{theorem}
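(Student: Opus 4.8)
The statement is an ``if and only if'', so the plan is to treat the two directions separately: necessity (a winning strategy forces the inequality) by a counting argument, and sufficiency (the inequality yields a winning strategy) by exhibiting an explicit strategy. I expect necessity to be routine and the sufficiency construction to be where the real idea lives.

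For necessity I would argue as follows. Let $N=\prod_{i=1}^n a_i$ be the total number of colorings. Fix any strategy. Since each sage sees everyone else, the guess of sage $i$ is a function of the colors of the sages $j\neq i$ only; hence for each fixed assignment of those colors there is exactly one color of sage $i$ on which his guess is correct. Therefore the set $W_i$ of colorings on which sage $i$ is correct satisfies $|W_i|=N/a_i$. If the strategy wins, the $W_i$ cover all $N$ colorings, so by subadditivity $N\le\sum_i|W_i|=N\sum_i 1/a_i$, which gives $\sum_i 1/a_i\ge 1$. This step is immediate once one notices that $|W_i|$ is forced to be $N/a_i$.

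For sufficiency the plan is to encode a coloring $\vec c=(c_1,\dots,c_n)$ as a single point on the circle. I would set
$$\varphi(\vec c)=\Big\{\sum_{i=1}^n \frac{c_i}{a_i}\Big\}\in[0,1),$$
the fractional part, and record the key observation: if every color except $c_i$ is fixed, then as $c_i$ ranges over $\{0,\dots,a_i-1\}$ the point $\varphi$ runs through $a_i$ points of $\mathbb{R}/\mathbb{Z}$ that are equally spaced with gap exactly $1/a_i$. Consequently any half-open arc of length at most $1/a_i$ contains \emph{at most} one of these points. I would then put $\beta_0=0$ and $\beta_i=\min\!\big(1,\sum_{k\le i}1/a_k\big)$; because $\sum 1/a_i\ge 1$ we get $\beta_n=1$, so the intervals $I_i=[\beta_{i-1},\beta_i)$ partition $[0,1)$ and each has length $\le 1/a_i$. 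The strategy of sage $i$ is: guess the unique color $c_i$ (if one exists) making $\varphi(\vec c)\in I_i$, and guess $0$ otherwise. To verify it, take any true coloring; its point $\varphi$ lies in exactly one interval $I_i$, and for that $i$ the true color already realizes $\varphi\in I_i$, so by the at-most-one property it is the color that sage $i$ guesses. Hence some sage is always correct.

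The hard part is not the verification but the \emph{choice} of the statistic $\varphi$ together with the equally-spaced-points observation; once those are in place, the covering problem collapses to the triviality of tiling $[0,1)$ by intervals of prescribed lengths. The one spot I would be careful about is the strict inequality $\sum 1/a_i>1$: there the intervals have length strictly below $1/a_i$, so a sage may find no admissible color and guess the default, but this is harmless because the intervals still partition $[0,1)$ and the single ``responsible'' sage—the one whose interval contains $\varphi$—always does find the correct color. I expect this bookkeeping, namely the well-definedness of each sage's guess when the arcs are short, to be the only place needing explicit detail.
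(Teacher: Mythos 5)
Your proof is correct in both directions. Note that the paper does not actually prove this theorem — it cites it as Theorem 2.1 of an earlier paper — but your argument is essentially the same ``arithmetic strategy'' that the authors attribute to that source and illustrate in their $K_5^-$ lemma: their sum $S=21a_2+14a_3+3a_{14}\bmod 42$ with its orbits and traps is exactly the discrete version of your statistic $\varphi=\bigl\{\sum_i c_i/a_i\bigr\}$, with the traps playing the role of your half-open arcs meeting each orbit at most once, so no further comparison is needed.
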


\subsection{Application of constructors: windmills}

Let us show how constructors can be applied to study games with
constant hatness function. The following theorem was proved
in~\cite{he20_hat_guess_books_windm}. Using constructors makes it almost
obvious.

Let $k$ and $n$ be arbitrary positive integers.
Let $G_1$, $G_2$, \dots, $G_n$ be $n$ copies of complete graph $K_k$ in
each of which vertex $A$ is marked. A \emph{windmill} is a graph
$W_{k,n}=G_1\gplus{A} G_2\gplus{A}\ldots\gplus{A} G_n$. The vertex $A$ in this graph
is called the \emph{axis} of the windmill.

\begin{theorem}\label{thm:windmills}
  \cite[Theorem 1.4]{he20_hat_guess_books_windm}
  Let $k \geq 2$ and $n \geq \log_2(2k-2)$, then $\hgn(W_{k, n})=2k-2$.
\end{theorem}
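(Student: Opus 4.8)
The plan is to establish the two inequalities $\hgn(W_{k,n}) \geq 2k-2$ and $\hgn(W_{k,n}) \leq 2k-2$ separately, each by a short application of the constructor theorems. The whole argument rests on analysing a single blade $K_k$ (one copy of $G_i$) with the help of Theorem~\ref{thm:clique-win}, and then assembling the $n$ blades at the axis $A$ with the product constructor for the lower bound and with the losing-sum constructor for the upper bound.

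For the lower bound I would first choose the hatnesses on one blade so that it is winning while keeping the axis hatness as small as possible. Assign hatness $2k-2$ to each of the $k-1$ non-axis vertices and hatness $a$ to the axis $A$. By Theorem~\ref{thm:clique-win} this blade is winning iff $\frac{1}{a} + \frac{k-1}{2k-2} \geq 1$, and since $\frac{k-1}{2k-2} = \frac{1}{2}$ this reduces to $a \leq 2$. Taking $a = 2$ makes every blade a winning game $\HG_i$ with $h_i(A) = 2$. Applying Theorem~\ref{thm:multiplication} $n-1$ times to multiply the blades at $A$ (the partial product always meets the next blade only in $A$) yields a winning game on $W_{k,n}$ in which the axis has hatness $2^n$ and every other vertex has hatness $2k-2$. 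Because the hypothesis $n \geq \log_2(2k-2)$ gives $2^n \geq 2k-2$, this game majorizes $\langle W_{k,n}, \cnst{2k-2}\rangle$, which is therefore winning; hence $\hgn(W_{k,n}) \geq 2k-2$.

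For the upper bound I would show that $\langle W_{k,n}, \cnst{2k-1}\rangle$ is losing for every $n$, building the windmill one blade at a time with Theorem~\ref{thm:sum-lose}. A single blade with constant hatness $2k-1$ is losing, since $\frac{k}{2k-1} < 1$; likewise a blade with non-axis hatness $2k-1$ and axis hatness $2$ is losing, because $\frac{1}{2} + \frac{k-1}{2k-1} < 1$ for $k \geq 2$. Starting from the first blade (axis hatness $2k-1$) and repeatedly gluing on a losing blade of axis hatness $2$, each step satisfies the hypothesis $h_1(A) \geq h_2(A) = 2$ of Theorem~\ref{thm:sum-lose}, the axis hatness stays $2k-1$, and the glued game remains losing. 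After $n-1$ gluings we obtain exactly $\langle W_{k,n}, \cnst{2k-1}\rangle$, so it is losing and $\hgn(W_{k,n}) \leq 2k-2$.

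The main thing to get right is the bookkeeping of the axis hatness in the upper-bound induction: one must verify that gluing preserves the description ``axis hatness $2k-1$, all other vertices hatness $2k-1$'' at every step, so that the final graph is literally the constant game, and that the inequality $h_1(A) \geq h_2(A) = 2$ required by Theorem~\ref{thm:sum-lose} holds throughout. Everything else is a direct substitution into Theorem~\ref{thm:clique-win}.
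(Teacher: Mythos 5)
Your proof is correct, and the lower bound is exactly the paper's argument: the blade $[2,2k-2,\dots,2k-2]$ is winning by Theorem~\ref{thm:clique-win} because $\frac12+\frac{k-1}{2k-2}=1$, multiplying $n$ copies at the axis via Theorem~\ref{thm:multiplication} gives axis hatness $2^n\geq 2k-2$, and majorization finishes.

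For the upper bound you use the same two tools (Theorems~\ref{thm:clique-win} and~\ref{thm:sum-lose}) but with a slightly different blade. The paper glues $n$ copies of the losing blade $[2,\,2k-1,\,2k-2,\dots,2k-2]$ (its reciprocal sum is $1+\frac{1}{2k-1}-\frac{1}{2k-2}<1$; the paper's assertion that this blade is ``winning'' is evidently a typo, since Theorem~\ref{thm:sum-lose} requires both summands to be losing) and then passes to $\langle W_{k,n},\cnst{2k-1}\rangle$ by majorization at the end. You instead start from a constant-$(2k-1)$ blade and repeatedly glue the losing blade $[2,\,2k-1,\dots,2k-1]$; since Theorem~\ref{thm:sum-lose} keeps $h_1$ on all of $V(G_1)$, the axis retains hatness $2k-1$ throughout and the result is literally the constant game, so no final majorization is needed. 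Your bookkeeping checks out ($\frac{k}{2k-1}<1$ and $\frac12+\frac{k-1}{2k-1}<1$ for $k\geq 2$, and $h_1(A)=2k-1\geq h_2(A)=2$ at every gluing), so both routes are valid; yours is marginally cleaner at the cost of larger per-blade hatnesses.
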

\begin{proof}
  First, we prove that the game $\HG_1 = \langle {W_{k, n}, \cnst{2k-2}}
  \rangle$ is winning. Consider the complete graph $K_k$ in which one of vertices is  labeled $A$. Let $h$ be the hatness function such that $h(A)=2$, and the value of $h$ for the other vertices is equal to $2k-2$. The game $\HG=\langle {K_k, h} \rangle$ is winning by theorem~\ref{thm:clique-win}, because $\frac{1}{2k-2}\cdot (k -
  1) + \frac{1}{2} = 1$. Using theorem~\ref{thm:multiplication}, multiply $n$
  copies of $\HG$ with respect to the vertex $A$. We obtain a winning game $\HG_2=\langle {W_{k, n},
    h_2} \rangle$ on the windmill $W_{k,n}$ with axis $A$, where $h_2(A)=2^n\geq
  2k-2$ and $h_2(v)$ is equal to $2k-2$ for vertices $v\ne A$. The game $\HG_2$
  majorizes the game $\HG_1$, so the game $\HG_1$ is winning.

  Now, we prove that game $\HG_1' = \langle {W_{k, n}, \cnst{2k-1}} \rangle$ is losing.
  Label two vertices in the complete graph $K_k$ by $A$ and $B$ and consider the
  game  $\HG' = \langle {K_k, h'} \rangle$, where $h'(A)=2$, $h'(B)=2k-1$, and
  the values of $h'$ at the other vertices equal to $2k-2$. By
  theorem~\ref{thm:clique-win} the game $\HG'$ is winning. Using
  theorem~\ref{thm:sum-lose} multiply $n$ copies of game $\HG'$ with respect to the vertex $A$.
 We obtain a losing game $\HG_2'=\langle {W_{k,n},h_2'} \rangle$, where $h_2'(A)=2$
  and the values of $h_2'$ are equal to $2k-2$ or $2k-1$ at the other vertices. The game $\HG_2'$ is majorized by $\HG_1'$, so the game $\HG_1'$ is losing.

  The winning property of the game $\HG_1$ and the losing property of the game $\HG_1'$ together mean exactly that $\hgn(W_{k,  n}) = 2k-2$.

\end{proof}

The proof of the lower bound for $\hgn$ for theorem 1.5
from~\cite{he20_hat_guess_books_windm} is similar. Unfortunately, we do not have suitable constructors for the upper bound. This demonstrates once again that to prove that a game is losing is much harder than to prove that a game is winning.

\section{Planarity}

Recall that a graph is said to be \emph{planar}, if it can be drawn on the plane in such
a way that its edges intersect only at their endpoints. If there is an embedding in
the plane such that all vertices belong to the unbounded face of the embedding,
then the graph is is said to be \emph{outerplanar}.

The relation between the number $\hgn(G)$ and the planarity of graph $G$ is one
of the open problems. Namely, does there exist a planar graph with an arbitrary large
hat guessing number? In this section, we build an outerplanar graph $G$ with
$\hgn(G)\geq 6$ and planar graph $G$ with $\hgn(G)\geq 14$. Currently, we see no
approaches that could allow one to increase these numbers.

\subsection{Planarity and the product and cone constructors}

The product constructor (theorem~\ref{thm:multiplication},
fig.~\ref{fig:multiplication}), obviously, preserves planarity: we get a winning
planar graph by ``multiplying'' winning planar graphs.

\begin{definition}

The \emph{list of values} of a hatness function $h$ is the list $L(h)$ of all
values of $h$ arranged  in non-decreasing order. For example, the graph $G$ on
fig.~\ref{fig:planar6} has the list of values $L(G) = (6, 6, \dots, 6, 8)$.
\end{definition}

Hereafter we do not consider trivial games for which $\min h = 1$.

\begin{lemma}[on the second minimum]\label{thm:second-min}
  Let $\HG = \langle {G, h} \rangle$ be a winning game on planar graph $G$,
  $L(h) = (a_1, a_2, \dots)$. Then there exists a planar graph $G'$ with $\hgn(G')
  \geq a_2$.
\end{lemma}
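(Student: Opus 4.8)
The plan is to raise the single smallest hatness up to $a_2$ by multiplying the given game by disjoint copies of itself at that vertex, using the product constructor (theorem~\ref{thm:multiplication}), which preserves planarity. First I would dispose of the degenerate case $a_1 = a_2$: here $\min h = a_1 = a_2$, so $\HG$ majorizes the constant game $\langle G, \cnst{a_2}\rangle$, and since $\HG$ is winning so is $\langle G, \cnst{a_2}\rangle$; thus $G' = G$ already satisfies $\hgn(G')\geq a_2$. So assume $a_1 < a_2$. Because $L(h)$ lists the hatnesses in non-decreasing order, a strict gap $a_1 < a_2$ forces the minimum to be attained at a \emph{unique} vertex $v_0$ (two vertices of value $a_1$ would make $a_2 = a_1$), and every other vertex of $G$ then has hatness at least $a_2$. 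Recall also that trivial games are excluded, so $a_1 = \min h \geq 2$.

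Next I would fix $r = \lceil \log_{a_1} a_2 \rceil$ and take $r$ pairwise vertex-disjoint copies $G^{(1)}, \dots, G^{(r)}$ of $G$, each carrying its own copy of the winning game $\HG$, and glue them by identifying the $r$ copies of $v_0$ into a single vertex $A$. This is exactly the iterated product $\HG^{(1)} \gtimes{v_0} \cdots \gtimes{v_0} \HG^{(r)}$: at each step the two factors share only the vertex $v_0$, so theorem~\ref{thm:multiplication} applies and the resulting game $\langle G', h'\rangle$ is winning by induction on the number of factors. The underlying graph $G'$ is planar, since the product of planar graphs at a single common vertex is planar — the copies can be drawn in disjoint sectors around $A$.

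It then remains only to verify the hatnesses. The product multiplies the hatness at the glued vertex, so $h'(A) = a_1^{\,r} \geq a_1^{\,\log_{a_1} a_2} = a_2$, while every other vertex of $G'$ lies in some copy away from $v_0$ and hence keeps a hatness $\geq a_2$. Therefore $\min h' \geq a_2$, so the winning game $\langle G', h'\rangle$ majorizes the constant game $\langle G', \cnst{a_2}\rangle$, which is consequently winning, giving $\hgn(G') \geq a_2$. The construction is short, and the only points requiring care are the uniqueness of $v_0$ and the planarity of the iterated product; both are immediate, so I expect no serious obstacle.
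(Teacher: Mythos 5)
Your proposal is correct and follows essentially the same route as the paper: pick the vertex of hatness $a_1$, multiply enough disjoint copies of the game at that vertex via theorem~\ref{thm:multiplication} so that its hatness $a_1^{k}$ reaches $a_2$, observe that planarity is preserved, and conclude by majorization of the constant game. The extra case split on $a_1=a_2$ and the uniqueness remark are harmless but not needed, since the argument goes through verbatim in that case as well.
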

\begin{proof}
  Let $A$ be the sage with hatness $a_1$. Pick $k$ such that $a_1^k \geq a_2$
  (this can be done, because $a_1 > 1$). Multiply $k$ copies of the game $\HG$ with respect ot the vertex $A$ using theorem~\ref{thm:multiplication}. We obtain a winning game $\HG'=\langle {G', h'} \rangle$, where $h'(A)=a_1^k\geq a_2$. Game $\HG '$
  majorizes the game $\langle {G', \cnst{a_2}} \rangle$ . So, the latter is winning
  too, and $\hgn(G')\geq \min h' = a_2$. Graph $G'$ is planar as a sum
  of planar graphs.
\end{proof}

Informally, this lemma means that searching for planar graphs with large hat
guessing number, we can ``allow'' one vertex to have small hatness (it is
reasonable to assign this vertex the smallest possible hatness, i.\,e.~2).
Similar statements are true for every graph property preserved under the
multiplication of graphs by a vertex (e.\,g. for outerplanarity).

The cone theorem, like the product theorem, can be used to build planar
graphs with large hat guessing number due to the following observation.

\begin{lemma}\label{thm:wheel}
  Let $\HG = \langle {G, \cnst{x}} \rangle$ be a winning game on outerplanar
  graph $G$, and let $\HG_1 = \langle {G_1, h_1} \rangle$ be a winning game on
a planar graph $G_1$, $L(h_1) = (a_1, a_2, a_3, \dots)$. Then there exists a graph
  $G'$ with $\hgn(G') \geq \min(a_2\cdot x, a_3)$.
\end{lemma}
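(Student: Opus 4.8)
The plan is to realize the desired graph as a cone (Theorem~\ref{thm:konus}) whose base is the outerplanar game $\HG=\langle {G,\cnst{x}} \rangle$ and whose gadgets are copies of $\HG_1$, and then to strip off the single low-hatness apex using the second-minimum lemma (Lemma~\ref{thm:second-min}).

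First I would fix the roles of the vertices inside the gadget. Let $O$ be a vertex of $G_1$ of minimal hatness, so that $h_1(O)=a_1$, and let $A$ be a neighbour of $O$ whose hatness equals the second entry $a_2$ of the list $L(h_1)$. Writing $V(G)=\{A_1,\dots,A_k\}$, where every $A_i$ has hatness $x$, I would take $k$ disjoint copies of $\HG_1$, label the apex of each copy by the common symbol $O$ (which is legitimate, since all copies share the value $h_1(O)=a_1$), and label the chosen neighbour in the $i$-th copy by $A_i$. Applying Theorem~\ref{thm:konus} with this base and these gadgets yields a winning game $\langle {\Gamma,h'} \rangle$ in which $h'(O)=a_1$, each attachment gets $h'(A_i)=h_1(A)\cdot x=a_2\cdot x$, and every remaining vertex keeps its old hatness and hence has value at least $a_3$.

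Next I would verify the two things the statement really asserts. For planarity: since $G$ is outerplanar, its vertices may be placed on a circle with $O$ at the centre, and each gadget copy can be drawn inside the wedge cut out by two consecutive rim vertices, using the edge $OA_i$ as the portion of the gadget lying on the outer boundary of that wedge. This is exactly the point where outerplanarity of the base, rather than mere planarity, is used: it is what permits a single apex joined (through the gadgets) to all the $A_i$. For the hat guessing number: the list $L(h')$ has smallest entry $a_1$, attained at the apex $O$, and its second entry is $\min(a_2x,a_3)$; feeding the winning planar game $\langle {\Gamma,h'} \rangle$ into Lemma~\ref{thm:second-min} then produces a planar graph $G'$ with $\hgn(G')\geq\min(a_2x,a_3)$.

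The step I expect to be the real obstacle is guaranteeing that the vertex we boost is a genuine second minimum, i.e.\ that some minimal vertex $O$ has a neighbour of hatness $a_2$. When $a_2=a_3$ this is automatic, since then every non-apex vertex already has hatness at least $a_2=a_3=\min(a_2x,a_3)$ once the apex is removed, so no boosting is needed. When $a_2<a_3$, however, a single unboosted $a_2$-vertex would pull the second entry of $L(h')$ below $\min(a_2x,a_3)$ unless that vertex is precisely the attachment $A$. One may attempt to force the required adjacency by inserting the edge between the $a_1$- and $a_2$-vertices of $G_1$ --- this never harms the sages, as additional visibility can only help them win, so the game stays winning --- but one must then ensure that this new edge does not spoil the planarity of the gadget, i.e.\ that the two vertices lie on a common face of a planar embedding of $G_1$. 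Reconciling \emph{boost the second-minimum vertex} with \emph{keep the gadget planar} is the crux; the remainder is the bookkeeping supplied by Theorem~\ref{thm:konus} and Lemma~\ref{thm:second-min}.
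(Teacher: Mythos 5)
Your proposal matches the paper's proof essentially step for step: apply the cone theorem with the $a_1$-vertex of each copy of $G_1$ as the apex $O$ and the $a_2$-vertex as the attachment $A_i$, obtain planarity by drawing the gadgets in the outer face of the outerplanar base, note that $L(h')$ becomes $(a_1,\min(a_2x,a_3),\dots)$, and finish with Lemma~\ref{thm:second-min}. The adjacency concern you raise at the end (the cone theorem needs $A_i$ to be a neighbour of $O$ in $G_i$, which the lemma's hypotheses do not guarantee) is genuine, but it is silently assumed in the paper's own proof as well and holds in every application made of the lemma (e.g.\ in $K_5^-$ the hatness-$2$ and hatness-$3$ vertices are adjacent), so your write-up is, if anything, more careful than the printed one.
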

\begin{proof}
  Apply theorem~\ref{thm:konus} for the game $\HG$ and the collection of games $\HG_i$, $1\leq
  i \leq |V(G)|$, where for every $i$ the game $\HG_i$ is a copy of game $\HG_1$.
  In each graph $G_i$ denote by $O$ and $A_i$ the vertices where the sages with hatness $a_1$ and $a_2$, respectively,c are located (see below fig.~\ref{fig:26666},
  example~\ref{ex:26666} and Subsection~\ref{subsec:planar14}). As a result, we
  obtain a winning game $\HG ' = \langle {G', h'} \rangle$. Graph $G'$ is planar,
  because  in this construction we can draw all graphs $G_i$ in the
  outer face of the  graph $G$.

The list $L(h')$ contains the values $a_1, a_3, a_4, \dots$ and $a_2\cdot x$. The
  second minimum in this list is $a_3$ or $a_2\cdot x$. So,  by
  lemma~\ref{thm:second-min}, there exists a planar graph
  with hat guessing number at least $\min(a_2\cdot x, a_3)$.
\end{proof}

This lemma shows that when building planar graphs with large hat guessing number,
the hatness function can have a relatively small second minimum. However, when applying the cone theorem, we have to compensate this deficiency using outerplanar graphs
with relatively large hat guessing number.

\begin{ex}\label{ex:26666}
  The game <<26666>> in fig.~\ref{fig:26666} is winning (the values of the
  hatness function are indicated near each vertex). It is obtained in the spirit of
  lemma~\ref{thm:wheel}. For this, we apply the cone theorem for the outerplanar graph $G$:
  we clue graph $G_1$ and its copy $G_2$ at the vertex $O$ and construct a copy of graph $G$ on the vertices $A_1$ and $A_2$ .
\end{ex}

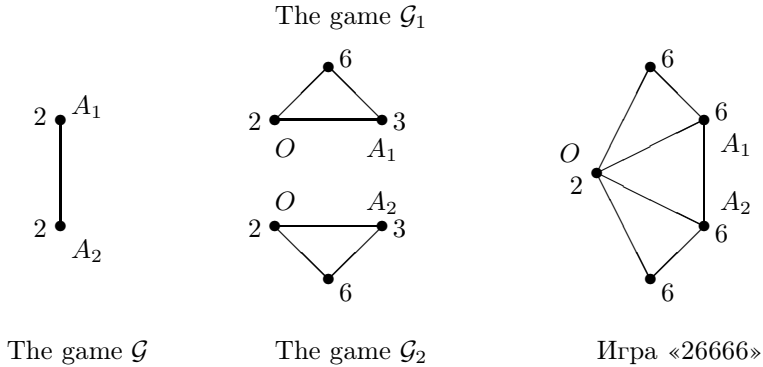
\begin{figure}[h]%
\setlength{\unitlength}{20bp}\footnotesize
\begin{center}
\begin{picture}(13,7)(0,-.5)%
    \put(1,2){\line(0,1){2}}%
    \put(1,2){\circle*{.2}}\put(1,4){\circle*{.2}}
    \put(1.2,1.4){$A_2$}   \put(1.2,4.1){$A_1$}
    \put(.5,1.8){2}        \put(.5,3.9){2}
    \put(0,-.5){The game $\HG$}
    \put(5,2){\line(1,0){2}}\put(5,2){\line(1,-1){1}}\put(7,2){\line(-1,-1){1}}
    \put(5,2){\circle*{.2}} \put(7,2){\circle*{.2}}  \put(6,1){\circle*{.2}}
    \put(5.,2.3){$O$}  \put(6.7,2.3){$A_2$}
    \put(4.5,1.8){2}    \put(7.2,1.8){3} \put(6.2,.6){6}
    \put(5,-.5){The game $\HG_2$}
    \put(5,4){\line(1,0){2}}\put(5,4){\line(1,1){1}}\put(7,4){\line(-1,1){1}}
    \put(5,4){\circle*{.2}} \put(7,4){\circle*{.2}}  \put(6,5){\circle*{.2}}
    \put(5.,3.3){$O$}  \put(6.7,3.3){$A_1$}
    \put(4.5,3.8){2}    \put(7.2,3.8){3} \put(6.2,5){6}
    \put(5,5.8){The game $\HG_1$}
    \put(11,3){\line(2,1){2}}\put(11,3){\line(1,2){1}}\put(13,4){\line(-1,1){1}}
    \put(11,3){\line(2,-1){2}}\put(11,3){\line(1,-2){1}}
    \put(13,2){\line(-1,-1){1}}\put(13,2){\line(0,1){2}}
    \put(11,3){\circle*{.2}} \put(13,4){\circle*{.2}}  \put(12,5){\circle*{.2}}
    \put(13,2){\circle*{.2}}  \put(12,1){\circle*{.2}}
    \put(10.3,3.2){$O$}  \put(13.3,3.4){$A_1$}  \put(13.3,2.3){$A_2$}
    \put(10.5,2.6){2}    \put(13.2,4){6}        \put(13.2,1.7){6}
    \put(12.2,5){6}      \put(12.2,.6){6}
    \put(11,-.5){Игра <<26666>>}
  \end{picture}%
\caption{An application of the cone theorem}\label{fig:26666}
\end{center}
\end{figure}

\begin{ex}\label{ex:planar6}
  Multiplying  three copies of the game <<26666>> (fig.~\ref{fig:26666}) by theorem~\ref{thm:multiplication}, we obtain a game ``Trefoil'', shown in
  fig.~\ref{fig:planar6}. This is an example of a winning outerplanar graph with hat
  guessing number at least 6. Due to computational experiments, we are sure that
  this hat guessing number is exactly 6, but we will not prove this here.
\end{ex}

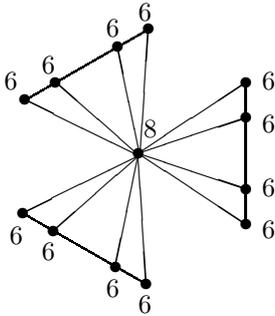
\begin{figure}[h]%
\setlength{\unitlength}{20bp}\footnotesize
\begin{center}
\begin{picture}(5,6)(-1.5,0)%
    \put(1,3){\line(3,2){2}}\put(1,3){\line(3,1){2}}
    \put(1,3){\line(3,-2){2}}\put(1,3){\line(3,-1){2}}
    \put(3,1.67){\line(0,1){2.66}}
    \put(3,4.33){\circle*{.2}} \put(3,3.67){\circle*{.2}}  \put(1,3){\circle*{.2}}
    \put(3,1.67){\circle*{.2}}  \put(3,2.33){\circle*{.2}}
\put(0.38,3.58){\rotatebox{120}{\put(1,3){\line(3,2){2}}\put(1,3){\line(3,1){2}}
    \put(1,3){\line(3,-2){2}}\put(1,3){\line(3,-1){2}}
    \put(3,1.67){\line(0,1){2.66}}
    \put(3,4.33){\circle*{.2}} \put(3,3.67){\circle*{.2}}
    \put(3,1.67){\circle*{.2}}  \put(3,2.33){\circle*{.2}}
    }}
\put(-1.11,5.31){\rotatebox{240}{\put(1,3){\line(3,2){2}}\put(1,3){\line(3,1){2}}
    \put(1,3){\line(3,-2){2}}\put(1,3){\line(3,-1){2}}
    \put(3,1.67){\line(0,1){2.66}}
    \put(3,4.33){\circle*{.2}} \put(3,3.67){\circle*{.2}}
    \put(3,1.67){\circle*{.2}}  \put(3,2.33){\circle*{.2}}
    }}
    \put(3.3,3.4){6} \put(3.3,4.2){6} \put(3.3,2.2){6} \put(3.3,1.4){6}
    \put(1,5.5){6}   \put(.4,5.2){6}  \put(-1.5,4.2){6}\put(-.8,4.5){6}
    \put(1,0){6}   \put(.4,.3){6}  \put(-1.4,1.3){6}\put(-.8,1.){6}
    \put(1.1,3.3){8}
  \end{picture}%
\caption{The game <<Trefoil>>}\label{fig:planar6}
\end{center}
\end{figure}

\subsection{An example of an arithmetic strategies on an almost complete graph}

 \begin{definition}
   An \emph{almost complete graph} is a complete graph without one edge,
   we denote it by $K_n^-$. If the vertices are numbered, we assume that the edge between the two last vertices is removed.
 \end{definition}

In~\cite{Kokhas2019cliques2rus_en} the authors proved theorem~\ref{thm:clique-win}
by demonstrating strategies based on arithmetic considerations. We use this approach to
prove that the following game on an almost complete graph is winning.

\begin{lemma}
The game $\HG = \langle {K_5^-, [2, 3, 14, 14, 14]} \rangle$ is winning.
\end{lemma}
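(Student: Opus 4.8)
The plan is to exhibit an explicit ``arithmetic'' guessing strategy in the spirit of the proof of Theorem~\ref{thm:clique-win}, encoding the hats as residues: $c_1\in\ZZ_2$, $c_2\in\ZZ_3$, and $c_3,c_4,c_5\in\ZZ_{14}$. The only missing edge joins the two hatness-$14$ sages $v_4,v_5$, so $v_1,v_2,v_3$ see all the other sages, while $v_4$ and $v_5$ both see exactly $\{v_1,v_2,v_3\}$ and are blind to each other. First I would fix the guesses of $v_1,v_2,v_3$ by the weighted-sum method: each guesses so as to be correct precisely when a suitable weighted sum of the colors it sees lands in a prescribed target set on the cycle $\ZZ/42$, of sizes $21,14,3$ realizing the fractions $\tfrac12,\tfrac13,\tfrac1{14}$. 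Because $\tfrac12+\tfrac13+\tfrac3{14}=\tfrac{44}{42}=1+\tfrac1{21}>1$, there is a genuine surplus of covering power, and this surplus is precisely the resource the construction must spend.

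The key reduction is as follows. Fix $(c_1,c_2,c_3)$ and let $(c_4,c_5)$ range over the grid $\ZZ_{14}\times\ZZ_{14}$. Since $v_4$ (resp. $v_5$) does not see $c_5$ (resp. $c_4$), its guess depends only on $(c_1,c_2,c_3)$; hence $v_4$ is correct exactly on a full column $\{c_4=a\}$ and $v_5$ exactly on a full row $\{c_5=b\}$, so jointly they cover a ``cross''. Therefore it suffices to choose the strategies of $v_1,v_2,v_3$ so that, for every $(c_1,c_2,c_3)$, the set of pairs $(c_4,c_5)$ on which $v_1,v_2,v_3$ are \emph{simultaneously} wrong is contained in some cross; one then simply points $v_4,v_5$ at that cross.

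The main obstacle is the shape of this residual set. If $v_1,v_2,v_3$ used a single global weighted sum $S=21c_1+14c_2+3(c_3+c_4+c_5)\pmod{42}$, then for fixed $(c_1,c_2,c_3)$ their joint failure set would depend on $(c_4,c_5)$ only through $c_4+c_5$, i.e.\ it would be a union of full anti-diagonals $\{c_4+c_5\equiv\mathrm{const}\}$. A cross meets each such $14$-cell line in at most two cells, so it can never absorb even one anti-diagonal, and the naive strategy fails despite the favorable fraction count. The heart of the proof is thus to break this translation symmetry: since $v_1,v_2,v_3$ genuinely see $c_4$ and $c_5$ individually, I would let their target sets (equivalently, the thresholds of the small-hatness sages $v_1,v_2$) depend on $(c_4,c_5)$ in a two-dimensional, non-additive way, so that for each $(c_1,c_2,c_3)$ the residual all-wrong region is shattered from a union of lines into a single row-plus-column.

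Concretely, I would arrange the $14\times14$ grid so that $v_3$---the one sage of hatness $14$, capable of covering a whole line---absorbs one distinguished line per setting, while $v_1$ and $v_2$ are tuned cell-by-cell to clean up everything except one residual row and one residual column; the $\tfrac1{21}$ surplus is exactly what pays for the unavoidable over-counting of the cross, whose row and column re-cover cells already handled by $v_1,v_2,v_3$. The remaining work is bookkeeping: specifying these $(c_4,c_5)$-dependent target sets, checking that each guessing rule is well defined and still realizes its intended fraction, and verifying on the finite residue grid that the simultaneous-failure region is cross-shaped for all $84$ choices of $(c_1,c_2,c_3)$. I expect this finite verification, rather than any single clever inequality, to be the laborious core of the argument.
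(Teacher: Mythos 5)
Your architecture coincides with the paper's: the paper also uses a weighted sum $S=21a_2+14a_3+3a_{14}\bmod 42$ computable by the two non-adjacent hatness-$14$ sages, lets those two cover a cross in the $\ZZ_{14}\times\ZZ_{14}$ grid (via two ``traps'' $\mathcal B=\{0,1,2\}$ and $\mathcal C=\{0,4,8\}$, each meeting every $14$-orbit once), and makes the three mutually visible sages cover the complement with target sets that depend on the hats of the two blind sages. Your diagnosis of why a single global sum over all five colors fails, and your accounting of the surplus $44-42=2$ (the two ``superpositions'' in the paper's table), are both on point.

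However, the proposal stops exactly where the proof begins. The entire content of the lemma is the existence of the $(c_4,c_5)$-dependent target sets: one must exhibit, for each relative position of the two shifted traps, disjoint sets $\mathcal A_2,\mathcal A_3,\mathcal A_{14}\subset\ZZ_{42}$ of sizes $21$, $14$, $3$ that meet every $2$-, $3$-, $14$-orbit exactly once, respectively, and whose union with the two shifted traps is all of $\ZZ_{42}$. Your own anti-diagonal example shows that the favorable count $\tfrac{44}{42}>1$ does not by itself guarantee such a covering, so ``the remaining work is bookkeeping'' is not a safe conclusion: the bookkeeping could fail, and whether it succeeds depends on the choice of the two traps, which you never make. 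The paper resolves this with the concrete traps above and an explicit table of $14$ cases (one per relative shift of $\mathcal B$ against $\mathcal C$, up to rotation), each listing $\mathcal A_2$ as a run of $21$ consecutive residues, $\mathcal A_3$ as a run of $14$, and $\mathcal A_{14}$ as three residues distinct modulo $3$. Without that table or a structural argument replacing it, the lemma is not proved. A smaller point: your proposed division of labor --- $v_3$ ``absorbing one distinguished line'' while $v_1,v_2$ are ``tuned cell-by-cell'' --- does not correspond to anything you verify; in the actual construction all three sets $\mathcal A_2,\mathcal A_3,\mathcal A_{14}$ vary jointly with the pair of trap shifts.
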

\begin{proof}
Denote the sages by $A_2$, $A_3$, $A_{14}$, $B_{14}$, $C_{14}$, the subscripts show the
hatness, the edge $B_{14}C_{14}$ is absent. Denote the colors of sages' hats (given or
assumed) by $a_2$, $a_5$, $a_{14}$, $b_{14}$, $c_{14}$. Below, the calculations are modulo 42. For every hat arrangement consider, the sum
$$
S=21a_2 + 14a_3+3a_{14} \bmod 42.
$$
Let $M$ be the set of residues modulo 42. For every $x\in M$, the \emph{2-orbit} of $x$ is the set $\{x,x+21\}\subset M$, the \emph{3-orbit} is the
set $\{x,x+14,x+28\}\subset M$, and the \emph{14-orbit} is the set $\{x+3i:
i=0,1,2,\dots,14\}\subset M$. The sets $\mathcal B=\{0,1,2\}\subset M$ and $\mathcal
C=\{0,4,8\}\subset M$ are called \emph{traps}. It is obvious that these sets
meet every 14-orbit exactly at one point.

Now we describe the winning strategy for the sages. Sages $B_{14}$ and $C_{14}$ see hats
of sages $A_2$, $A_3$, and $A_{14}$ and can compute $S$. Let sage $B$ check the
hypothesis $S+3b_{14}\in\mathcal B$, and sage $C_{14}$ check the hypothesis
$S+3c_{14}\in\mathcal C$.

Sages $A_2$, $A_3$, and $A_{14}$ see the hat colors of $B_{14}$ and $C_{14}$,
and draw the conclusion, that if
$$
S\in(\mathcal B-3b_{14}), \quad\text{or }\quad  S\in(\mathcal C-3c_{14})
$$
 (the subtractions modulo 42), then $B_{14}$ or $C_{14}$ guess their own color
 correctly. Thus, if $S\notin (\mathcal B-3b_{14})\cup (\mathcal
 C-3c_{14})$, then sages $A_2$, $A_3$, and $A_{14}$ must guess correctly. For this,
 construct disjoint sets $\mathcal A_2$, $\mathcal A_3$, $\mathcal A_{14}$  such that
$$
\mathcal A_2 \cup \mathcal A_3 \cup \mathcal A_{14} \cup (\mathcal B-3b_{14})\cup (\mathcal C-3c_{14}) =M.
$$
Let every sage $A_i$, where $i=2$, 3, 14, choose an assumed color of his hat
so that $S\in\mathcal A_i$ under this assumption.
This can certainly be done if set $\mathcal A_i$ meets each $i$-orbit at exactly
one point.
For this, let $\mathcal{A}_2$ be an interval of the form $[x,x+20]$ consisting of 21 consecutive residues and $\mathcal{A}_3$ be an interval $[x,x+13]$.
Set $\mathcal A_{14}$ must consist of three numbers with
different residues modulo 3. In this way, if for the given hat arrangement the sum $S$ belongs to the set $ \mathcal A_i$, then sage $A_i$ indeed guesses hat color correctly.

Up to a cyclic permutation, there are 14 cases of mutual position of the shifted traps
$\mathcal B-3b_{14}$ and $\mathcal C-3c_{14}$. Without loss of generality we may assume that
trap $\mathcal C=\{0,4,8\}$ is not shifted, and trap
$\mathcal B$ has one of the positions $\{3i, 3i+1, 3i+2\}$, $i=0$, 1, 2, \dots,
13. In each of these cases, define sets $\mathcal A_2$, $\mathcal A_3$, $\mathcal
A_{14}$ as specified in the table.

\begin{center}
\footnotesize
\begin{tabular}{cccccc}
$\mathcal C$ & $\mathcal B$ & $\mathcal A_2$ & $\mathcal A_3$ & $\mathcal A_{14}$& Superpositions\\ \hline
0, 4, 8        & 0, 1, 2    & $[5, 25]$ & $[26,39]$ & 40, 41, 3 & 0, 8\\
0, 4, 8        & 3, 4, 5    & $[7, 27]$ & $[28,41]$ & 1, 2, 6   & 4, 8\\
0, 4, 8        & 6, 7, 8    & $[11, 31]$ & $[32,3]$ & 5, 9, 10  & 0, 8\\
0, 4, 8        & 9, 10, 11  & $[15, 35]$ & $[36,7]$ & 12, 13, 14& 0, 4\\
0, 4, 8        & 12, 13, 14 & $[15, 35]$ & $[36,7]$ & 9, 10, 11 & 0, 4\\
0, 4, 8        & 15, 16, 17 & $[21, 41]$ & $[1,14]$ & 18, 19, 20& 4, 8\\
0, 4, 8        & 18, 19, 20 & $[21, 41]$ & $[1,14]$ & 15, 16, 17& 4, 8\\
0, 4, 8        & 21, 22, 23 & $[25, 3] $ & $[5,18]$ & 19, 20, 24& 0, 8\\
0, 4, 8        & 24, 25, 26 & $[29, 7] $ & $[9,22]$ & 23, 27, 28& 0, 4\\
0, 4, 8        & 27, 28, 29 & $[5, 25] $ & $[32,3]$ & 26, 30, 31& 0, 8\\
0, 4, 8        & 30, 31, 32 & $[9, 29] $ & $[33,4]$ & 5, 6, 7   & 0, 4\\
0, 4, 8        & 33, 34, 35 & $[12,32] $ & $[36,7]$ & 9, 10, 11 & 0, 4\\
0, 4, 8        & 36, 37, 38 & $[15,35] $ & $[1,14]$ & 39, 40, 41& 4, 8\\
0, 4, 8        & 39, 40, 41 & $[15,35] $ & $[1,14]$ & 36, 37, 38& 4, 8\\
\end{tabular}
\end{center}

\end{proof}

\subsection{A planar graph with hat guessing number at least
  14}\label{subsec:planar14}

\begin{theorem}\label{thm:planar6}
  There exists a planar graph $G''$ with hat guessing number at least 14.
\end{theorem}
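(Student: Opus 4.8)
The plan is to obtain $G''$ by a single application of Lemma~\ref{thm:wheel}, supplying it with two ready-made bricks: the outerplanar Trefoil as the constant-hatness base game, and the almost complete game on $K_5^-$ as the planar game $\HG_1$.

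For the base game I take $\HG = \langle G_0, \cnst{6}\rangle$, where $G_0$ is the Trefoil of Example~\ref{ex:planar6}. Since the Trefoil is outerplanar with hat guessing number at least $6$, the game $\HG$ is winning with constant hatness $x = 6$ on an outerplanar graph, exactly as Lemma~\ref{thm:wheel} requires.

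For the planar game I take $\HG_1 = \langle K_5^-, [2,3,14,14,14]\rangle$, which is winning by the preceding lemma. Its list of values is $L(h_1) = (2,3,14,14,14)$, so $a_1 = 2$, $a_2 = 3$, $a_3 = 14$; the graph $K_5^-$ is planar, and the two lowest-hatness vertices $A_2$ and $A_3$ are adjacent because the only absent edge is $B_{14}C_{14}$. This adjacency is precisely what lets the cone step inside Lemma~\ref{thm:wheel} treat $A_2$ (hatness $a_1 = 2$) as the vertex $O$ and $A_3$ (hatness $a_2 = 3$) as its distinguished neighbor $A_i$ in each copy of $\HG_1$.

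Feeding these two bricks into Lemma~\ref{thm:wheel} produces a planar graph $G''$ with
$$
\hgn(G'') \geq \min(a_2\cdot x,\, a_3) = \min(3\cdot 6,\, 14) = \min(18,\, 14) = 14,
$$
which is the assertion of the theorem. I expect that all the real difficulty lies in the two supporting results rather than in this assembly: the arithmetic strategy of the preceding lemma is what pins the three large hatnesses down to exactly $14$ while keeping the second minimum as small as $a_2 = 3$, and the Trefoil must be produced as an outerplanar graph with $\hgn \geq 6$ so that $a_2\cdot x = 18 \geq 14$. Granting both, the theorem reduces to the numerical observation that $3\cdot 6$ and $14$ are each at least $14$, whence $\min(a_2\cdot x, a_3) = 14$.
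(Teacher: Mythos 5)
Your proposal is correct and follows essentially the same route as the paper, which itself states that the theorem "immediately follows from Lemma~\ref{thm:wheel}" applied to exactly these two games; the paper's written-out proof merely unwinds the lemma in this specific instance (cone over the Trefoil with $13$ copies of $K_5^-$, then multiplying four copies at the hatness-$2$ vertex $O$ so that $2^4=16\geq 14$). Your explicit check that the hatness-$2$ and hatness-$3$ vertices of $K_5^-$ are adjacent, as required for the cone step, is a detail the paper handles the same way.
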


The proof immediately follows from lemma~\ref{thm:wheel} applied to game
$\HG_1=\langle {K_5^-, [2, 3, 14, 14, 14]} \rangle$ and  game $\HG=$``Trefoil'' on an outerplanar graph (fig.~\ref{fig:planar6}).

\begin{proof}
Let game $\HG=$``Trefoil'' (see~fig.~\ref{fig:planar6}), this is a
  game on outerplanar graph $G$ with 13 vertices, denote them by $A_1$,
  $A_2$, \dots, $A_{13}$. Consider 13 copies $\HG_i$, $1\leq i\leq 13$,
  of the game $\langle {K_5^-, [2, 3, 14, 14, 14]} \rangle$. In each of these games,
  denote by $O$ the vertex of hatness 2 and denote by $A_i$ the vertex of hatness 3. It is
  easy to see that graphs $G_i$ are planar. Applying the cone theorem to these
  graphs (see~fig.~\ref{fig:planar2-16}), we obtain a game $\langle {G',h'}
  \rangle$ with one vertex $O$ of hatness 2 and several vertices of hatnesses 14,
  18, or 24. Finally, multiply four copies of game $\langle {G',h'} \rangle$ with respect to the vertex~$O$.

  The crafted game $\HG''=\langle {G'',h''} \rangle$ is played on planar graph $G''$,
  where $G''=G'\gplus{O} G'\gplus{O} G'\gplus{O} G'$ and $\hgn(G'')\geq \min h''=14$.

\end{proof}

\begin{figure}[h]
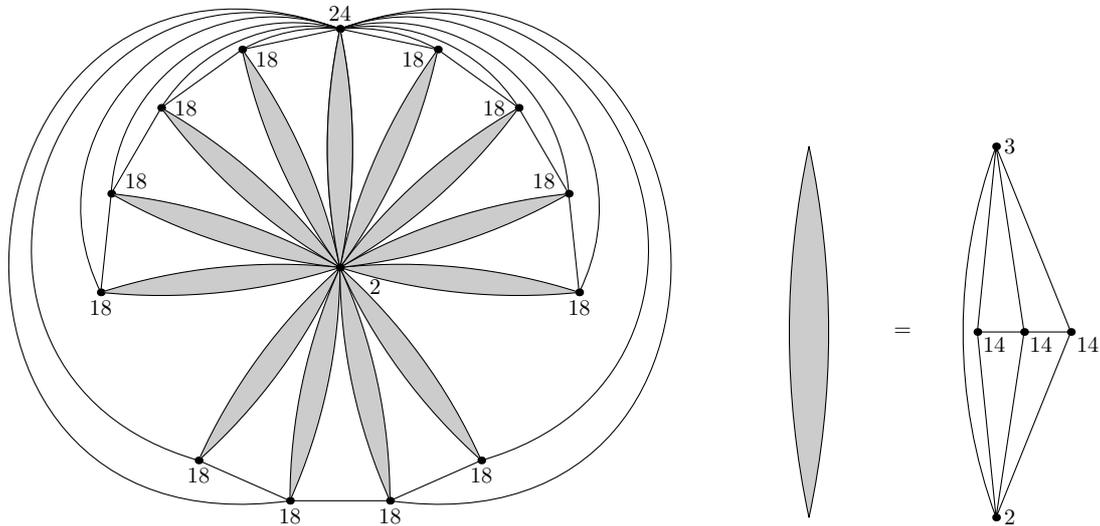
%
  \setlength{\unitlength}{20bp}\footnotesize
  \begin{center}
    \epsfig{scale=1,file=pic/planar1.mps}
    \qquad\qquad
    \epsfig{scale=1,file=pic/planar2.mps}
  \end{center}
  \caption{The game $\langle {G',h'} \rangle$. Each petal stands for a copy of the graph $K_5^{-}$.
  Gluing four copies of this game together, we obtain a planar graph with  $\hgn(G'\gplus{2} G'\gplus{2} G'\gplus{2} G')\geq 14$.}
  \label{fig:planar2-16}
\end{figure}

\section{Conclusion}

The version of the \hats game with non-constant hatness function and the theory of
constructors have proved to be a fruitful approach to study of the classical hat guessing game. They
provide natural formulations, structurally visible examples, visual and at the same
time meaningful proofs. But the computational complexity of the game prevents from
putting forward hasty conjectures and  effectively protects the game from a complete analysis.

\printbibliography

\end{document}